\newtheorem{theorem}{Theorem}[section]
\newtheorem{lemma}[theorem]{Lemma}
\newtheorem{corollary}[theorem]{Corollary}
\theoremstyle{definition}
\newtheorem{definition}[theorem]{Definition}
\newtheorem{example}[theorem]{Example}
\newtheorem*{acknowledgement}{Acknowledgement}
\theoremstyle{remark}
\newtheorem{remark}[theorem]{Remark}
\DeclareMathOperator{\diam}{diam}
\DeclareMathOperator{\Fix}{Fix}
\begin{document}
\title[Uniformly Lipschitzian group actions]{Uniformly Lipschitzian group
actions \\
on hyperconvex spaces}
\author[A. Wi\'{s}nicki]{Andrzej Wi\'{s}nicki}
\author[J. Wo\'{s}ko]{Jacek Wo\'{s}ko}

\begin{abstract}
Suppose that $\{T_{a}:a\in G\}$ is a group of uniformly $L$-Lipschitzian
mappings with bounded orbits $\left\{ T_{a}x:a\in G\right\} $ acting on a
hyperconvex metric space $M$. We show that if $L<\sqrt{2}$, then the set of
common fixed points $\Fix G$ is a nonempty H\"{o}lder continuous retract of $%
M.$ As a consequence, it follows that all surjective isometries acting on a
bounded hyperconvex space have a common fixed point. A fixed point theorem
for $L$-Lipschitzian involutions and some generalizations to the case of $%
\lambda $-hyperconvex spaces are also given.
\end{abstract}

\subjclass[2010]{ Primary 47H10, 54H25; Secondary 37C25, 47H09.}
\keywords{Uniformly Lipschitzian mapping, Group action, Hyperconvex space, H%
\"{o}lder continuous retraction, Fixed point, Involution.}
\address{Andrzej Wi\'{s}nicki, Department of Mathematics, Rzesz\'{o}w
University of Technology, Al. Powsta\'{n}c\'{o}w Warszawy 12, 35-959 Rzesz%
\'{o}w, Poland}
\email{awisnicki@prz.edu.pl}
\address{Jacek Wo\'{s}ko, Institute of Mathematics, Maria Curie-Sk{\l }%
odowska University, 20-031 Lublin, Poland}
\email{jwosko@hektor.umcs.lublin.pl}
\date{}
\maketitle

\section{Introduction}

A metric space $(M,d)$ is called hyperconvex if
\begin{equation*}
\bigcap_{\alpha \in \Gamma }B(x_{\alpha },r_{\alpha })\neq \emptyset
\end{equation*}%
for any collection of closed balls $\{B(x_{\alpha },r_{\alpha })\}_{\alpha
\in \Gamma }$ such that $d(x_{\alpha },x_{\beta })\leq r_{\alpha }+r_{\beta
},\ \alpha ,\beta \in \Gamma .$

The space $C(S)$ of continuous real functions on a Stonian space with the
\textquotedblleft supremum\textquotedblright\ norm is hyperconvex and every
hyperconvex real Banach space is $C(S)$ for some Stonian space. The standard
examples of hyperconvex spaces are $\ell _{\infty },L_{\infty }$ and their
unit balls.

The terminology is due to N. Aronszajn and P. Panitchpakdi \cite{ArPa} who
proved that a hyperconvex space is a nonexpansive retract of any metric
space in which it is isometrically embedded. J. Isbell \cite{Is} showed that
every metric space is isometric to a subspace of a unique \textquotedblleft
minimal\textquotedblright\ hyperconvex space called the injective envelope.
This notion was later rediscovered by A. Dress in \cite{Dr} as the tight
span in the context of optimal networks and phylogenetic analysis. For a
deeper discussion of hyperconvex spaces we refer the reader to \cite{EsFe,
EsKh, EsLo}.

In 1979, R. Sine \cite{Si} and P. Soardi \cite{So} showed independently that
nonexpansive mappings (i.e., mappings which satisfy $d(Tx,Ty)\leq
d(x,y),~x,y\in M$) defined on a bounded hyperconvex space has fixed points.
Since then, a number of fixed-point results in hyperconvex spaces were
obtained of both topological and metric character. In particular, J. Baillon
\cite{Ba} showed that any intersection of hyperconvex spaces with a certain
finite intersection property is a nonempty hyperconvex space. As a
consequence, he proved that the set of common fixed points of a commuting
family of nonexpansive mappings acting on a bounded hyperconvex space is a
nonexpansive retract of $M$.

In this paper we focus on properties of fixed-point sets of uniformly
Lipschitzian group actions on hyperconvex and\ $\lambda $-hyperconvex
spaces. Uniformly Lipschitzian mappings, introduced in \cite{GoKi1}, are
natural generalization of nonexpansive mappings. We say that a mapping $%
T:M\rightarrow M$ is uniformly $L$-Lipschitzian if $d(T^{n}x,T^{n}y)\leq
Ld(x,y)$ for each $x,y\in M$ and $n\in \mathbb{N}$. For example,
Lipschitzian periodic mappings are uniformly Lipschitzian. E. A. Lifschitz
\cite{Li} proved that if $C$ is a bounded, closed and convex subset of a
Hilbert space and $L<\sqrt{2}$, then every uniformly $L$-Lipschitzian
mapping $T:C\rightarrow C$ has a fixed point. From the geometric point of
view, Hilbert and hyperconvex spaces are two extremes and yet there are some
similarities between them. It was proved in \cite{LiXu} that every uniformly
$L$-Lipschitzian mapping with $L<\sqrt{2}$ in a bounded hyperconvex space
with the so-called property $(P)$ has a fixed point. This result was later
generalized in \cite{DKS} but a question whether a counterpart of Lifshitz's
theorem holds in every bounded hyperconvex space remains open.

We partially answer this question in Theorem \ref{1} in the case of
uniformly Lipschitzian group actions. Note that we assume the boundedness of
orbits only, instead of the boundedness of a target space. Theorem \ref{2}
generalizes the result to the case of $\lambda $-hyperconvex spaces. As a
consequence, we obtain the following rather surprising result, proved
recently by U. Lang \cite{La}: there exists a common fixed point for all
surjective isometries acting on a bounded hyperconvex space. We also give a
fixed point theorem for $L$-Lipschitzian involutions (Theorem \ref{3}). Note
that M. S. Brodski\u{\i} and D. P. Mil'man \cite{BrMi} proved a similar
statement for all surjective isometries acting on a weakly compact, convex
subset of a Banach space with normal structure.

In Section 3 we give a qualitative complement to Theorem \ref{1}: if $%
\{T_{a}:a\in G\}$ is a group of uniformly $L$-Lipschitzian mappings on a
hyperconvex space $M$ with $L<\sqrt{2}$ and the orbits are bounded, then $%
\Fix G$ is a H\"{o}lder continuous retract of $M.$

We begin with some basic definitions and notation. Let $\left( M,d\right) $
be a metric space. A semigroup $\left( S,\cdot \right) $ is said to act on $%
M $ (from the left) if there is a map%
\begin{equation*}
\varphi :S\times M\rightarrow M
\end{equation*}%
such that
\begin{equation*}
\varphi \left( a,\varphi \left( b,x\right) \right) =\varphi \left( a\cdot
b,x\right)
\end{equation*}%
for all $a,b\in S$ and $x\in M$. If $S$ has the identity element $e$, we
further assume that $\varphi \left( e,x\right) =x,~x\in M.$

\begin{enumerate}
\item If $T:M\rightarrow M$ is a mapping, then $\left( \mathbb{N},+\right) $
acts on $M$ by
\begin{equation*}
\varphi \left( 0,x\right) =x,\ \varphi \left( n,x\right) =T^{n}x.
\end{equation*}

\item If $T:M\rightarrow M$ is a bijection, then we get in a similar way an
action of a group $\left( \mathbb{Z},+\right) $ on $M$.

\item In general, if a group $G$ acts on $M$, then $\varphi \left( a,\cdot
\right) $ is a bijection for every $a\in G.$

\item If $G=\mathbb{Z}_{n}$, then $\varphi \left( 1,\cdot \right) $ is a
periodic mapping with a period $n$.
\end{enumerate}

In this paper we study uniformly Lipschitzian group actions.

\begin{definition}
A group (or a semigroup) $G$ is said to act uniformly $L$-Lipschitzly on a
metric space $M$ (\thinspace $L>0$) if
\begin{equation*}
\forall a\in G~\forall x,y\in M~d\left( \varphi \left( a,x\right) ,\varphi
\left( a,y\right) \right) \leq Ld\left( x,y\right) .
\end{equation*}
\end{definition}

\begin{remark}
If $\left( \mathbb{N},+\right) $ acts on $M$, then we obtain the usual
definition of a uniformly Lipschitzian mapping. If $G=\mathbb{Z}$ we recover
the definition of a uniformly bi-Lipschitzian mapping.
\end{remark}

From now on we shall write $\{T_{a}:a\in G\}$ for a representation of $G$ as
uniformly Lipschitzian mappings, i.e., $T_{a}x=\varphi \left( a,x\right)
,a\in G,x\in M.$ Define a metric
\begin{equation*}
d_{G}\left( x,y\right) =\sup_{a\in G}d\left( T_{a}x,T_{a}y\right) .
\end{equation*}%
Notice that the mappings $T_{a}$ are nonexpansive in this metric and even
isometries if $G$ is a group. The metric $d_{G}$ is equivalent to $d$ since
\begin{equation*}
d\left( x,y\right) \leq d_{G}\left( x,y\right) \leq Ld\left( x,y\right) .
\end{equation*}%
It follows that a group (a semigroup, resp.) acts uniformly Lipschitzly iff
it acts isometrically (nonexpansively, resp.) in an equivalent metric.

Recall that the orbit of the point $x\in M$ is the set
\begin{equation*}
O\left( x\right) =\left\{ T_{a}x:a\in G\right\}
\end{equation*}%
and its diameter $\delta (x)=\sup_{a,b\in G}d\left( T_{a}x,T_{b}x\right) .$
We say that the orbit is bounded if $\delta (x)<\infty .$ The radius of the
orbit $O\left( x\right) $ relative to a point $y\in M$ is defined by
\begin{equation*}
r\left( y,x\right) =\sup_{a\in G}d\left( y,T_{a}x\right)
\end{equation*}%
and the radius of $O\left( x\right) $ is given by
\begin{equation*}
r\left( x\right) =\inf_{y\in M}r\left( y,x\right) .
\end{equation*}%
It is easy to see that%
\begin{equation*}
\frac{\delta (x)}{2}\leq r\left( x\right) \leq \delta (x).
\end{equation*}%
Notice that if a group $G$ acts on $M$, then the orbits are disjoint or
equal, in other words they form the equivalence classes of this action.

\begin{lemma}
\label{bounded}If $\{T_{a}:a\in G\}$ is a group (or a semigroup) of
uniformly Lipschitzian mappings, then all orbits are simultaneously bounded
or unbounded.
\end{lemma}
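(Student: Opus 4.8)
The plan is to show that boundedness transfers between any two orbits, which immediately yields the stated dichotomy. Fix two points $x,y\in M$ and estimate the diameter of $O(y)$ in terms of the diameter of $O(x)$. For arbitrary $a,b\in G$ the triangle inequality gives
\[
d\left( T_{a}y,T_{b}y\right) \leq d\left( T_{a}y,T_{a}x\right) +d\left(
T_{a}x,T_{b}x\right) +d\left( T_{b}x,T_{b}y\right) .
\]
Since the action is uniformly $L$-Lipschitzian, the two outer terms are each at most $Ld(x,y)$, so the right-hand side is bounded above by $d\left( T_{a}x,T_{b}x\right) +2Ld(x,y)$.

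Taking the supremum over all $a,b\in G$ on both sides then yields
\[
\delta \left( y\right) \leq \delta \left( x\right) +2Ld\left( x,y\right) .
\]
In particular, $\delta (x)<\infty $ forces $\delta (y)<\infty $, and interchanging the roles of $x$ and $y$ gives the reverse implication. Hence the orbits through $x$ and $y$ are bounded or unbounded together, and since $x,y$ were arbitrary, all orbits are simultaneously bounded or unbounded.

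I do not expect a genuine obstacle here: the argument is a single application of the triangle inequality combined with the Lipschitz estimate. The one point requiring care is that the bound $d\left( T_{a}x,T_{a}y\right) \leq Ld\left( x,y\right) $ must hold for \emph{every} $a\in G$ with one and the same constant $L$ (not merely for the iterates of a fixed mapping), which is precisely what the definition of a uniformly $L$-Lipschitzian action supplies; this uniformity is exactly what allows the suprema to be taken termwise and keeps the additive correction $2Ld(x,y)$ finite. Alternatively, one could phrase the same estimate in the equivalent metric $d_{G}$, in which each $T_{a}$ is nonexpansive, but the direct computation above is cleaner and already covers both the group and the semigroup case.
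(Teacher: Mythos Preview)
Your proof is correct and follows essentially the same approach as the paper: both apply the triangle inequality to $d(T_{a}y,T_{b}y)$, bound the outer terms by $Ld(x,y)$ via the uniform Lipschitz condition, and take the supremum to obtain $\delta(y)\leq \delta(x)+2Ld(x,y)$. Your additional remarks about the role of uniformity and the alternative $d_{G}$-metric formulation are accurate but not needed for this elementary step.
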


\begin{proof}
Assume that $\delta (x)<\infty $ for some $x\in M.$ Then, for every $y\in M$,%
\begin{align*}
\delta \left( y\right) & =\sup_{a,b\in G}d\left( T_{a}y,T_{b}y\right) \leq
\sup_{a,b\in G}(d\left( T_{a}y,T_{a}x\right) +d\left( T_{a}x,T_{b}x\right)
+d\left( T_{b}x,T_{b}y\right) ) \\
& \leq 2Ld\left( x,y\right) +\delta \left( x\right) <\infty .
\end{align*}
\end{proof}

The following definition is central for our work.

\begin{definition}
The center of the orbit of $x\in M$ is the set
\begin{equation*}
C\left( x\right) =\left\{ y\in M:r\left( y,x\right) =r\left( x\right)
\right\}
\end{equation*}%
and the center of $C(x)$ is defined by
\begin{equation*}
CC\left( x\right) =\bigcap_{y\in C\left( x\right) }B\left( y,r\left(
x\right) \right) \cap C\left( x\right) .
\end{equation*}
\end{definition}

\section{Common fixed points}

\begin{definition}
A point $x_{0}\in M$ is a common fixed point for a group $\{T_{a}:a\in G\}$
if
\begin{equation*}
\forall a\in G\text{ }T_{a}x_{0}=x_{0}.
\end{equation*}%
The set of common fixed points is denoted by $\Fix G.$
\end{definition}

Notice that if $\Fix G$ is nonempty and the group (semigroup) acts uniformly
Lipschitzly, then all orbits are bounded, see Lemma \ref{bounded}, i.e., the
boundedness of orbits is a necessary condition for the existence of common
fixed points in this case.

In this section we prove theorems concerning the existence of common fixed
points for uniformly Lipschitzian group actions on hyperconvex spaces. First
recall some basic facts.

\begin{definition}
A metric space $(M,d)$ is called hyperconvex if
\begin{equation*}
\bigcap_{\alpha \in \Gamma }B(x_{\alpha },r_{\alpha })\neq \emptyset
\end{equation*}%
for any collection of closed balls $\{B(x_{\alpha },r_{\alpha })\}_{\alpha
\in \Gamma }$ such that $d(x_{\alpha },x_{\beta })\leq r_{\alpha }+r_{\beta
},\ \alpha ,\beta \in \Gamma .$
\end{definition}

It is not difficult to see that hyperconvex spaces are complete. We will use
this fact several times. In hyperconvex spaces,%
\begin{equation*}
r\left( x\right) =\frac{\delta \left( x\right) }{2},
\end{equation*}%
whenever the orbits are bounded. The sets $C\left( x\right) ,CC\left(
x\right) $ are nonempty since $\diam C\left( x\right) \leq 2r\left( x\right)
.$ Furthermore,
\begin{equation*}
C\left( x\right) =\bigcap_{a\in G}B\left( T_{a}x,r\left( x\right) \right) .
\end{equation*}

\begin{theorem}
\label{1}If $\{T_{a}:a\in G\}$ is a group of uniformly $L$-Lipschitzian
mappings on a hyperconvex space $M$ with $L<\sqrt{2}$ and the orbits $%
O\left( x\right) $ are bounded, then $\Fix G$ is nonempty.
\end{theorem}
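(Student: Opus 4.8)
The plan is to produce a single point on which the whole orbit collapses, by iterating the double-center construction and showing that the orbit radius contracts geometrically. Starting from an arbitrary $x_{0}\in M$, I would choose inductively $x_{n+1}\in CC(x_{n})$; this is legitimate because the orbits are bounded and $M$ is hyperconvex, so each $CC(x_{n})$ is nonempty (as $\diam C(x_{n})\leq 2r(x_{n})$). The heart of the argument is the estimate
\begin{equation*}
r(x_{n+1})\leq k\,r(x_{n}),\qquad k=k(L)<1\text{ whenever }L<\sqrt{2},
\end{equation*}
which says that passing to the center of the center strictly shrinks the radius of the orbit. Everything else is assembly.

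Granting the contraction, convergence is routine. Since $T_{e}=\mathrm{id}$, we have $C(x_{n})\subseteq B(x_{n},r(x_{n}))$, hence $d(x_{n},x_{n+1})\leq r(x_{n})\leq k^{n}r(x_{0})$; thus $\sum_{n}d(x_{n},x_{n+1})<\infty$, the sequence $(x_{n})$ is Cauchy, and it converges to some $x_{*}\in M$ by completeness of the hyperconvex space $M$. To see that $x_{*}$ is a common fixed point I would show $\delta(x_{*})=0$. Exactly as in the proof of Lemma \ref{bounded}, for all $a,b\in G$,
\begin{equation*}
d(T_{a}x_{*},T_{b}x_{*})\leq d(T_{a}x_{*},T_{a}x_{n})+d(T_{a}x_{n},T_{b}x_{n})+d(T_{b}x_{n},T_{b}x_{*})\leq 2L\,d(x_{*},x_{n})+\delta(x_{n}),
\end{equation*}
and the right-hand side tends to $0$ since $\delta(x_{n})=2r(x_{n})\to0$ and $x_{n}\to x_{*}$. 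Hence $\delta(x_{*})=0$, the orbit of $x_{*}$ is a single point, and $x_{*}\in\Fix G$.

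The only real difficulty is the contraction estimate, and this is exactly where hyperconvexity and the bound $L<\sqrt{2}$ must enter in an essential way. Writing $r=r(x_{n})$, one checks quickly that applying a group element to a point of the center inflates distances by the Lipschitz constant: if $y\in C(x_{n})$ then $d(T_{g}y,T_{c}x_{n})\leq Lr$ for all $c$, so every image $T_{g}y$ lies in the enlarged center $\bigcap_{c}B(T_{c}x_{n},Lr)$. Combining this with the double-center property $d(y,y')\leq r$ for $y'\in C(x_{n})$ through the triangle inequality yields only $d(y,T_{g}y)\leq Lr$, which is useless for $L\geq1$. To gain a genuine factor below $1$ one must instead invoke the defining ball-intersection property of hyperconvexity directly: using that $O(x_{n})$ has diameter $2r$ and that $y\in CC(x_{n})$ is central inside $C(x_{n})$, I would intersect the orbit balls $B(T_{c}x_{n},r)$ with a ball centered at an image point $T_{g}y$ to produce a new \emph{admissible} center for $O(y)$ whose radius $s$ satisfies a relation forcing $s\leq k(L)\,r$ with $k(L)<1$ precisely when $L<\sqrt{2}$. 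The improvement over the triangle-inequality bound $Lr$ comes from solving for $s$ via hyperconvexity rather than merely adding distances. Determining the sharp constant $k(L)$ — reflecting that the relevant Lifshitz-type characteristic of hyperconvex spaces equals $\sqrt{2}$ — is the step I expect to be the most delicate.
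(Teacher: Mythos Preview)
Your overall architecture is exactly the paper's: pick $x_{n+1}\in CC(x_{n})$, prove a contraction on $r(x_{n})$, and pass to the limit. The convergence and fixed-point verification are fine. The gap is in the contraction step, and it is not where you think it is. You obtain (or nearly obtain) the bound $d(y,T_{g}y)\leq Lr$ for $y\in CC(x_{n})$ and all $g\in G$, and then dismiss it as ``useless for $L\geq 1$''. It is not useless: it is the decisive estimate. One more application of the uniform Lipschitz bound together with the \emph{group} structure gives
\[
d(T_{a}y,T_{b}y)\;\leq\;L\,d(y,T_{a^{-1}b}y)\;\leq\;L\cdot Lr\;=\;L^{2}r,
\]
so $\delta(y)\leq L^{2}r$, and since $M$ is hyperconvex, $r(y)=\delta(y)/2\leq (L^{2}/2)\,r$. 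Thus $k(L)=L^{2}/2$, which is $<1$ precisely when $L<\sqrt{2}$. The ``Lifshitz-type'' subtlety you anticipate does not arise; what you are missing is simply this second use of the Lipschitz constant via $T_{a}^{-1}=T_{a^{-1}}$, together with the halving $r=\delta/2$ from hyperconvexity.

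Incidentally, the alternative route you sketch---``intersect the orbit balls $B(T_{c}x_{n},r)$ with a ball centered at $T_{g}y$ to produce a new admissible center''---is exactly how the paper \emph{derives} the bound $d(y,T_{g}y)\leq Lr$: by hyperconvexity one finds $y_{g}\in C(x_{n})\cap B(T_{g}y,(L-1)r)$ (the ball radii are compatible because $d(T_{g}y,T_{c}x_{n})\leq Lr$), and then $d(y,T_{g}y)\leq d(y,y_{g})+d(y_{g},T_{g}y)\leq r+(L-1)r=Lr$, using that $y\in CC(x_{n})$. So your proposed detour loops back to the very inequality you discarded.
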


\begin{proof}
Without loss of generality we can assume that $L\in \left( 1,\sqrt{2}\right)
.$ Fix $x_{1}\in M$ and select $x_{2}\in CC\left( x_{1}\right) .$ Then%
\begin{equation*}
\forall a,b\in G\ d\left( T_{a}x_{2},T_{b}x_{1}\right) \leq Ld\left(
x_{2},T_{a^{-1}b}x_{1}\right) \leq Lr\left( x_{1}\right) .
\end{equation*}%
Now notice that from hyperconvexity, for every $a\in G$ there exists $%
y_{a}\in M$ such that
\begin{equation*}
y_{a}\in \bigcap_{b\epsilon G}B\left( T_{b}x_{1},r\left( x_{1}\right)
\right) \cap B\left( T_{a}x_{2},\left( L-1\right) r\left( x_{1}\right)
\right) .
\end{equation*}%
Hence $y_{a}\in C\left( x_{1}\right) $ and
\begin{equation*}
d\left( y_{a},T_{a}x_{2}\right) \leq \left( L-1\right) r\left( x_{1}\right) .
\end{equation*}%
Then%
\begin{equation*}
d\left( x_{_{2}},T_{a}x_{2}\right) \leq d\left( x_{2},y_{a}\right) +d\left(
y_{a},T_{a}x_{2}\right) \leq r\left( x_{1}\right) +\left( L-1\right) r\left(
x_{1}\right) =Lr\left( x_{1}\right)
\end{equation*}%
and it follows that
\begin{equation*}
d\left( T_{a}x_{2},T_{b}x_{2}\right) \leq Ld\left(
x_{2},T_{a^{-1}b}x_{2}\right) \leq L^{2}r\left( x_{1}\right)
\end{equation*}%
for every $a,b\in G.$ Hence
\begin{equation*}
\delta \left( x_{2}\right) \leq \frac{L^{2}}{2}\delta \left( x_{1}\right) .
\end{equation*}%
Next we select $x_{3}\in CC\left( x_{2}\right) $ and estimate $\delta \left(
x_{3}\right) $ in a similar way. We continue in this fashion obtaining
recursively a sequence $(x_{n})$ such that $\delta \left( x_{n+1}\right)
\leq \frac{L^{2}}{2}\delta \left( x_{n}\right) $ and $\delta \left(
x_{n+1},x_{n}\right) =\frac{\delta \left( x_{n}\right) }{2}.$ It follows
that $(x_{n})$ is a Cauchy sequence converging to a point $x_{0}\in \Fix G$
since%
\begin{equation*}
\delta \left( x_{0}\right) \leq 2Ld\left( x_{0},x_{n}\right) +\delta \left(
x_{n}\right) \rightarrow 0.
\end{equation*}
\end{proof}

In the above theorem we do not assume the boundedness of $M$ but the
boundedness of orbits, only. The following example of S. Prus (see \cite[p.
412]{KiSi}) shows that in the case of semigroups the boundedness of orbits
does not imply the existence of a fixed point even for nonexpansive mappings.

\begin{example}
Let $M=\ell _{\infty }$ and consider the semigroup generated by the mapping $%
T:\ell _{\infty }\rightarrow \ell _{\infty }$ defined by%
\begin{equation*}
T(x_{n})=\left( 1+\mathrm{LIM}~x_{n},x_{1},x_{2},...\right) ,
\end{equation*}%
where $\mathrm{LIM}$ denotes a Banach limit. Then $\left\Vert
T^{n}x\right\Vert \leq 1+\left\Vert x\right\Vert $ and
\begin{equation*}
\left\Vert T^{n}x-T^{n}y\right\Vert =\left\Vert x-y\right\Vert
\end{equation*}%
for every $x,y\in \ell _{\infty }.$ If $\bar{x}\in \ell _{\infty }$
satisfies $T\bar{x}=\bar{x}$, then%
\begin{equation*}
\bar{x}_{1}=1+\mathrm{LIM}~\bar{x}_{n},\ \bar{x}_{2}=\bar{x}_{1},\,\bar{x}%
_{3}=\bar{x}_{2},...
\end{equation*}%
and hence $\mathrm{LIM}~\bar{x}_{n}=1+\mathrm{LIM}~\bar{x}_{n},$ a
contradiction, which shows that $\Fix T=\emptyset .$
\end{example}

An interesting special case of Theorem \ref{1}, proved independently in \cite%
{La} (see also \cite{Dr2}), is concerned with the group of all surjective
isometries on a bounded hyperconvex space $M.$

\begin{corollary}
If $\{T_{a}:a\in G\}$ is a group of isometries with bounded orbits $O\left(
x\right) $ on a hyperconvex space $M$, then $\Fix G$ is nonempty. In
particular, if $M$ is bounded, there exists a common fixed point for all
surjective isometries on $M$.
\end{corollary}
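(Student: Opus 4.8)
The plan is to read this off directly from Theorem~\ref{1} by taking the Lipschitz constant to be $L=1$. First I would observe that an isometric action is in particular a uniformly $1$-Lipschitzian action: if each $T_a$ is an isometry then $d(T_a x,T_a y)=d(x,y)$ for all $x,y\in M$, so the group $\{T_a:a\in G\}$ acts uniformly $1$-Lipschitzly, and since $1<\sqrt{2}$ the standing hypothesis $L<\sqrt{2}$ of Theorem~\ref{1} is met. Combined with the assumed boundedness of the orbits $O(x)$, Theorem~\ref{1} gives at once that $\Fix G$ is nonempty. This disposes of the first assertion.

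For the second assertion I would let $G$ be the collection of all surjective isometries of $M$, acting on $M$ by evaluation $\varphi(a,x)=a(x)$. The one structural point to record is that $G$ really is a group: every isometry is automatically injective (since $d(Tx,Ty)=0$ forces $x=y$), so a surjective isometry is a bijection, and surjective isometries are closed under composition and inversion, with the identity map as unit. Hence $G$ is a group acting isometrically on $M$. Since $M$ is bounded, each orbit $O(x)\subseteq M$ satisfies $\delta(x)\le \diam M<\infty$, so all orbits are bounded (consistently with Lemma~\ref{bounded}). Applying the first assertion — equivalently, Theorem~\ref{1} with $L=1$ — then produces a point fixed by every surjective isometry of $M$.

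I do not anticipate a genuine obstacle, as the corollary is a direct specialization of Theorem~\ref{1}; no separate fixed-point argument is needed. The only things worth stating with care are the two reductions above: that an isometric group action is a uniformly $1$-Lipschitzian action, so that the admissible range $L<\sqrt{2}$ actually covers the isometric case, and that boundedness of the ambient space $M$ transfers to boundedness of every orbit. Both are immediate, and the conclusion follows simply by invoking Theorem~\ref{1}.
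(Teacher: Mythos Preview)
Your proposal is correct and matches the paper's intent: the corollary is stated immediately after Theorem~\ref{1} as a direct specialization with $L=1<\sqrt{2}$, and the boundedness of $M$ trivially gives bounded orbits for the group of all surjective isometries. No separate argument is given in the paper, and none is needed beyond what you wrote.
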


If $M$ is unbounded, it is not difficult to find two surjective isometries
without a common fixed point, however the group which they generate has
unbounded orbits.

\begin{example}
Consider the space $\mathbb{R}^{2}$ with the maximum norm and let $%
T_{a},T_{b}$ be rotations around two distinct points $a,b\in \mathbb{R}^{2}$
through the angle $\theta =\frac{\pi }{2}.$ Each of them is clearly a
surjective isometry with bounded orbits but the composition $T_{a}\circ T_{b}
$ (and hence the whole group) has unbounded orbits.
\end{example}

Now we give an analogous theorem for $\lambda $-hyperconvex spaces.

\begin{definition}
A subset $D$ of a metric space $M$ is called admissible if it is the
intersection of closed balls.
\end{definition}

\begin{definition}
A metric space $\left( M,d\right) $ is said to be $\lambda $-hyperconvex if
for every non-empty admissible set $D$ and for any family of closed balls $%
\{B(x_{\alpha },r_{\alpha })\}_{\alpha \in \Gamma }$ centered at $x_{\alpha
}\in D,\alpha \in \Gamma ,$ such that $d(x_{\alpha },x_{\beta })\leq
r_{\alpha }+r_{\beta },\ \alpha ,\beta \in \Gamma ,$ the intersection
\begin{equation*}
D\cap \bigcap_{\alpha \in \Gamma }B\left( x_{\alpha },\lambda r_{\alpha
}\right) \neq \emptyset .
\end{equation*}
\end{definition}

Like hyperconvex, $\lambda $-hyperconvex spaces are complete. It is known
that $M$ is hyperconvex iff it is $1$-hyperconvex. If $\{T_{a}:a\in G\}$ is
a group of mappings acting on a $\lambda $-hyperconvex space, we have the
following estimations:
\begin{equation*}
\frac{\delta \left( x\right) }{2}\leq r\left( x\right) \leq \lambda \frac{%
\delta \left( x\right) }{2}.
\end{equation*}%
However, the center $C\left( x\right) $ may be empty and instead we will use
the sets%
\begin{align*}
A\left( x\right) & =\bigcap_{a\in G}B\left( T_{a}x,\frac{\lambda \delta
\left( x\right) }{2}\right) , \\
AA\left( x\right) & =A\left( x\right) \cap \bigcap_{y\in A\left( x\right)
}B\left( y,\frac{\lambda ^{2}\delta \left( x\right) }{2}\right)
\end{align*}%
which are nonempty by the definition of $\lambda $-hyperconvexity.

\begin{theorem}
\label{2}If $\{T_{a}:a\in G\}$ is a group of uniformly $L$-Lipschitzian
mappings on a $\lambda $-hyperconvex space $M$ with $L<\frac{\sqrt{2}}{%
\lambda }$ and the orbits $O\left( x\right) $ are bounded, then $\Fix G$ is
nonempty.
\end{theorem}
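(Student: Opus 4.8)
The plan is to mimic the proof of Theorem \ref{1} almost verbatim, replacing the center construction $C(x),CC(x)$ (which may now be empty) with the sets $A(x),AA(x)$ that are guaranteed nonempty by $\lambda$-hyperconvexity, and to track how the extra factor of $\lambda$ propagates through the iteration. First I would fix $x_1\in M$ and select $x_2\in AA(x_1)$. Since $x_2\in A(x_1)$, for every $a\in G$ we have $d(x_2,T_{a^{-1}b}x_1)\le \tfrac{\lambda}{2}\delta(x_1)$, and the $L$-Lipschitz property gives $d(T_ax_2,T_bx_1)\le L\tfrac{\lambda}{2}\delta(x_1)$ for all $a,b\in G$. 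This plays the role of the bound $Lr(x_1)$ in the original argument.

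Next I would produce, for each $a\in G$, an auxiliary point $y_a$ by intersecting the balls defining $A(x_1)$ with one further ball of the appropriate small radius centered at $T_ax_2$. Concretely, I would invoke $\lambda$-hyperconvexity with the admissible set $D=A(x_1)$ and the family of balls $B(T_bx_1,\tfrac{\delta(x_1)}{2})$ together with a ball $B(T_ax_2,\rho)$, where $\rho$ is chosen so that the pairwise triangle-inequality radius conditions are met; the output is a point $y_a\in A(x_1)$ with $d(y_a,T_ax_2)\le \lambda\rho$. The goal is to bound $d(x_2,T_ax_2)$ via $d(x_2,y_a)+d(y_a,T_ax_2)$, where $d(x_2,y_a)\le \tfrac{\lambda^2}{2}\delta(x_1)$ because $x_2\in AA(x_1)$ and $y_a\in A(x_1)$. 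Collecting the constants and applying the Lipschitz bound once more to pass from $d(x_2,T_ax_2)$ to $d(T_ax_2,T_bx_2)$, I would arrive at a contraction estimate of the form $\delta(x_2)\le c\,\delta(x_1)$ with a constant $c=c(L,\lambda)$ that is a fixed multiple of $L^2\lambda^2$; the hypothesis $L<\tfrac{\sqrt2}{\lambda}$, i.e. $L^2\lambda^2<2$, should be exactly what forces $c<1$.

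Iterating, I would select $x_{n+1}\in AA(x_n)$, obtain $\delta(x_{n+1})\le c\,\delta(x_n)$ with the same $c<1$, and also control $d(x_{n+1},x_n)$ by a fixed multiple of $\delta(x_n)$ (since $x_{n+1}\in A(x_n)$ puts it in balls of radius $\tfrac{\lambda}{2}\delta(x_n)$ around the orbit of $x_n$). Geometric decay of $\delta(x_n)$ then makes $(x_n)$ Cauchy, and completeness of the $\lambda$-hyperconvex space $M$ gives a limit $x_0$. Finally, using $\delta(x_0)\le 2Ld(x_0,x_n)+\delta(x_n)\to 0$ exactly as in Theorem \ref{1}, I conclude $\delta(x_0)=0$, so $x_0\in\Fix G$.

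The main obstacle I anticipate is bookkeeping the constants so that the threshold comes out as precisely $L<\tfrac{\sqrt2}{\lambda}$ rather than something weaker. In the hyperconvex case $\lambda=1$ the clean split $(L-1)r(x)$ plus $r(x)$ telescopes perfectly into $Lr(x)$; for general $\lambda$ the analogous radii are $\tfrac{\lambda}{2}\delta(x)$ and $\tfrac{\lambda^2}{2}\delta(x)$, and I must choose the radius $\rho$ of the extra ball and verify the intersection-hypothesis $d(x_\alpha,x_\beta)\le r_\alpha+r_\beta$ for the mixed family so that the final contraction constant is exactly $\tfrac{(L\lambda)^2}{2}$ (or a sharp variant thereof). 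Getting this exponent and the factor right—so that $L^2\lambda^2<2$ is both necessary and sufficient for $c<1$—is the delicate point; everything else transfers mechanically from the proof of Theorem \ref{1}.
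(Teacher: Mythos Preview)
Your proposal is essentially identical to the paper's own proof: pick $x_{2}\in AA(x_{1})$, use $\lambda$-hyperconvexity to find $y_{a}\in A(x_{1})$ close to $T_{a}x_{2}$, combine $d(x_{2},y_{a})\le \tfrac{\lambda^{2}}{2}\delta(x_{1})$ with the bound on $d(y_{a},T_{a}x_{2})$ to get $\delta(x_{2})\le \tfrac{L^{2}\lambda^{2}}{2}\delta(x_{1})$, and iterate to a Cauchy sequence whose limit lies in $\Fix G$. The only minor adjustment is that the paper does not take $D=A(x_{1})$ (the centers $T_{b}x_{1}$ and $T_{a}x_{2}$ need not lie there); it applies $\lambda$-hyperconvexity directly to the full family of balls, with the extra ball chosen to have output radius $\tfrac{(L-1)\lambda^{2}}{2}\delta(x_{1})$, which is exactly the bookkeeping you flagged as the delicate step.
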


\begin{proof}
We can assume that $L\in \left( 1,\frac{\sqrt{2}}{\lambda }\right) $. Fix $%
x_{1}\in M$ and select $x_{2}\in AA\left( x_{1}\right) .$ Then%
\begin{equation*}
\forall a,b\in G~d\left( T_{a}x_{2},T_{b}x_{1}\right) \leq Ld\left(
x_{2},T_{a^{-1}b}x_{1}\right) \leq \frac{L\lambda \delta \left( x_{1}\right)
}{2}.
\end{equation*}%
It follows from $\lambda $-hyperconvexity that for every $a\in G$ there
exists $y_{a}\in M$ such that
\begin{equation*}
y_{a}\in \bigcap_{b\in G}B\left( T_{b}x_{1},\frac{\lambda \delta \left(
x_{1}\right) }{2}\right) \cap B\left( T_{a}x_{2},\frac{\left( L-1\right)
\lambda ^{2}\delta \left( x_{1}\right) }{2}\right) .
\end{equation*}%
Hence $y_{a}\in A\left( x_{1}\right) $ and
\begin{equation*}
d\left( y_{a},T_{a}x_{2}\right) \leq \frac{\left( L-1\right) \lambda
^{2}\delta \left( x_{1}\right) }{2}.
\end{equation*}%
Then
\begin{equation*}
d\left( x_{_{2}},T_{a}x_{2}\right) \leq d\left( x_{2},y_{a}\right) +d\left(
y_{a},T_{a}x_{2}\right) \leq \frac{\lambda ^{2}\delta \left( x_{1}\right) }{2%
}+\frac{\left( L-1\right) \lambda ^{2}\delta \left( x_{1}\right) }{2}=\frac{%
L\lambda ^{2}\delta \left( x_{1}\right) }{2}
\end{equation*}%
and hence%
\begin{equation*}
d\left( \varphi \left( a,x_{2}\right) ,\varphi \left( b,x_{2}\right) \right)
\leq Ld\left( x_{2},\varphi \left( b-a,x_{2}\right) \right) \leq \frac{%
L^{2}\lambda ^{2}\delta \left( x_{1}\right) }{2}
\end{equation*}%
for every $a,b\in G$, which gives%
\begin{equation*}
\delta \left( x_{2}\right) \leq \frac{L^{2}\lambda ^{2}}{2}\delta \left(
x_{1}\right) .
\end{equation*}%
Now we select $x_{3}\in AA\left( x_{2}\right) $ and estimate $\delta \left(
x_{3}\right) $ analogously. Thus we obtain recursively a sequence $(x_{n})$
such that
\begin{equation*}
\delta \left( x_{n+1}\right) \leq \frac{L^{2}\lambda ^{2}}{2}\delta \left(
x_{n}\right) ,
\end{equation*}%
\begin{equation*}
d\left( x_{n+1},x_{n}\right) \leq \frac{\lambda \delta \left( x_{n}\right) }{%
2}\leq \frac{\lambda }{2}\left( \frac{L^{2}\lambda ^{2}}{2}\right)
^{n-1}\delta \left( x_{1}\right) .
\end{equation*}%
It follows that $(x_{n})$ is a Cauchy sequence (since $\frac{L^{2}\lambda
^{2}}{2}<1$) which converges to a point $x_{0}\in \Fix G.$
\end{proof}

\begin{corollary}
If $\{T_{a}:a\in G\}$ is a group of isometries on a $\lambda $-hyperconvex
space $M$ with $\lambda <\sqrt{2}$ and the orbits are bounded, then $\Fix G$
is nonempty. In particular, if $M$ is bounded, there exists a common fixed
point for all surjective isometries on $M$.
\end{corollary}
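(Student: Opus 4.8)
The plan is to derive this corollary as a direct specialization of Theorem \ref{2}. The corollary concerns a group of isometries on a $\lambda$-hyperconvex space with $\lambda<\sqrt{2}$, and the key observation is that isometries are precisely uniformly $L$-Lipschitzian mappings with $L=1$. So I would first note that a group of isometries satisfies $d(T_a x, T_a y)=d(x,y)$ for all $a\in G$ and all $x,y\in M$, which in particular gives $d(\varphi(a,x),\varphi(a,y))\le Ld(x,y)$ with $L=1$. Thus the family $\{T_a:a\in G\}$ is a group of uniformly $1$-Lipschitzian mappings.

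Next I would verify that the hypotheses of Theorem \ref{2} are met. The condition there is $L<\frac{\sqrt{2}}{\lambda}$. Since we are assuming $\lambda<\sqrt{2}$, we have $\frac{\sqrt{2}}{\lambda}>1$, and therefore $L=1<\frac{\sqrt{2}}{\lambda}$. The boundedness of orbits is assumed in the corollary, exactly as required by Theorem \ref{2}. Hence all the hypotheses of Theorem \ref{2} hold, and we conclude immediately that $\Fix G$ is nonempty.

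For the second assertion, suppose $M$ is bounded and let $G$ be the group of all surjective isometries on $M$. By item (3) in the list of semigroup actions, each element of such a group acts as a bijection, so this is a genuine group action by isometries. Since $M$ is bounded, every orbit $O(x)=\{T_a x:a\in G\}$ is contained in $M$ and hence bounded, so $\delta(x)\le\diam M<\infty$. The first part of the corollary then applies and yields a common fixed point.

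I do not expect any genuine obstacle here, since the corollary is a straightforward reading of Theorem \ref{2} with $L=1$; the only point requiring a moment's care is checking the inequality $1<\frac{\sqrt{2}}{\lambda}$, which is equivalent to the stated hypothesis $\lambda<\sqrt{2}$ and is what forces the restriction on $\lambda$. It is worth remarking that this covers the hyperconvex case $\lambda=1$ of the earlier corollary as a special instance, consistent with the fact that a space is hyperconvex if and only if it is $1$-hyperconvex.
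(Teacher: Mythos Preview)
Your proposal is correct and matches the paper's approach: the corollary is stated without proof in the paper, being an immediate specialization of Theorem~\ref{2} with $L=1$, exactly as you argue. The verification that $L=1<\sqrt{2}/\lambda$ iff $\lambda<\sqrt{2}$ and the observation that boundedness of $M$ forces bounded orbits are the only points needed, and you handle both.
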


A natural example of group actions (where $G=\mathbb{Z}_{n}$) are $n$%
-periodic mappings. Uniformly Lipschitzian $n$-periodic mappings were
studied by W. A. Kirk in \cite{Ki}, where the results for a constant $L$
satisfying $n^{-2}[(n-1)(n-2)L^{2}+2(n-1)L]<1$ were obtained in any bounded,
closed, convex subset of a Banach space (see also \cite{GoPu}). Here we
obtain the results independent of $n$ and for some metric spaces that are
not necessarily bounded.

If a group $\mathbb{Z}_{2}$ acts on a metric space, the orbits consist of
two elements (so they are always bounded). In this case $T^{2}=I$ and such a
mapping $T$ is called an involution. In the following theorem we consider $%
\lambda $-hyperconvex spaces.

\begin{theorem}
\label{3}If $T$ is an $L$-Lipschitzian involution in a $\lambda $%
-hyperconvex space with $L<\frac{2}{\lambda ^{2}},$ then $\Fix T\neq
\emptyset $.
\end{theorem}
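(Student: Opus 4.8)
The plan is to run the same iterative scheme as in the proofs of Theorems \ref{1} and \ref{2}, specialized to the two-element group $G=\mathbb{Z}_{2}=\{e,g\}$ generated by the involution $T=T_{g}$, and to exploit the fact that a two-point orbit costs no second Lipschitz factor when one passes to the orbit diameter. Since $T^{2}=I$ and $T$ is $L$-Lipschitzian, $d(x,y)=d(T^{2}x,T^{2}y)\le L^{2}d(x,y)$, so $L\ge 1$; I may therefore assume $L\in(1,\tfrac{2}{\lambda^{2}})$, which in particular forces $\lambda<\sqrt{2}$ (the case $L=1$ is an isometric involution, already covered by the corollary to Theorem \ref{2}, and $T=I$ is trivial). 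The orbit of any $x$ is $O(x)=\{x,Tx\}$, automatically bounded, with $\delta(x)=d(x,Tx)$.

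First I would fix $x_{1}\in M$ and select $x_{2}\in AA(x_{1})$, which is nonempty by $\lambda$-hyperconvexity. Repeating the ball-intersection argument from the proof of Theorem \ref{2} with $G=\mathbb{Z}_{2}$: from $x_{2}\in A(x_{1})$ one gets $d(Tx_{2},T_{b}x_{1})\le L\,d(x_{2},T_{gb}x_{1})\le\tfrac{L\lambda\delta(x_{1})}{2}$ for each $b$, and $\lambda$-hyperconvexity then produces a point $y\in A(x_{1})$ with $d(y,Tx_{2})\le\tfrac{(L-1)\lambda^{2}\delta(x_{1})}{2}$. Since $x_{2}\in AA(x_{1})$ and $y\in A(x_{1})$ give $d(x_{2},y)\le\tfrac{\lambda^{2}\delta(x_{1})}{2}$, the triangle inequality yields
\[
d(x_{2},Tx_{2})\le\frac{\lambda^{2}\delta(x_{1})}{2}+\frac{(L-1)\lambda^{2}\delta(x_{1})}{2}=\frac{L\lambda^{2}\delta(x_{1})}{2}.
\]

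The conceptual heart of the argument — and the source of the improvement from $\tfrac{\sqrt{2}}{\lambda}$ to $\tfrac{2}{\lambda^{2}}$ — is the observation that for an involution $\delta(x_{2})=\sup_{a,b\in\mathbb{Z}_{2}}d(T_{a}x_{2},T_{b}x_{2})=d(x_{2},Tx_{2})$, because in every pair $(a,b)$ at least one index is the identity. Thus, in contrast with Theorem \ref{2}, where passing from $d(x_{2},T_{a}x_{2})$ to $\delta(x_{2})$ incurs an extra factor $L$, the estimate above is already a bound on the full orbit diameter: $\delta(x_{2})\le\tfrac{L\lambda^{2}}{2}\,\delta(x_{1})$, and $\tfrac{L\lambda^{2}}{2}<1$ is exactly $L<\tfrac{2}{\lambda^{2}}$.

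I would then iterate, choosing $x_{n+1}\in AA(x_{n})$ so that $\delta(x_{n+1})\le\tfrac{L\lambda^{2}}{2}\,\delta(x_{n})$ and, since $x_{n+1}\in A(x_{n})\subseteq B(x_{n},\tfrac{\lambda\delta(x_{n})}{2})$, also $d(x_{n+1},x_{n})\le\tfrac{\lambda\delta(x_{n})}{2}$. As $\tfrac{L\lambda^{2}}{2}<1$, the diameters $\delta(x_{n})$ decay geometrically and $(x_{n})$ is Cauchy; by completeness of $\lambda$-hyperconvex spaces it converges to some $x_{0}$, and $d(x_{0},Tx_{0})\le d(x_{0},x_{n})+\delta(x_{n})+L\,d(x_{n},x_{0})\to 0$ gives $x_{0}\in\Fix T$. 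I expect the only delicate point to be the verification of the radius-compatibility conditions in the $\lambda$-hyperconvexity step that produces $y$, which must be checked exactly as in the proof of Theorem \ref{2}; everything else is the now-standard geometric-decay argument, with the involution structure entering solely through the identity $\delta(x_{2})=d(x_{2},Tx_{2})$.
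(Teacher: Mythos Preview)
Your proposal is correct and follows essentially the same route as the paper: specialize the iterative $AA(\cdot)$ scheme of Theorem \ref{2} to $G=\mathbb{Z}_{2}$, use $\lambda$-hyperconvexity to approximate $Tx_{2}$ by a point of $A(x_{1})$, and exploit the identity $\delta(x)=d(x,Tx)$ for an involution to avoid the second factor of $L$, yielding the contraction ratio $\tfrac{L\lambda^{2}}{2}$ in place of $\tfrac{L^{2}\lambda^{2}}{2}$. Your added remarks on why $L\ge 1$ may be assumed and on the verification needed in the $\lambda$-hyperconvexity step are accurate and match what the paper leaves implicit.
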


\begin{proof}
In the case of an involution,%
\begin{align*}
\delta \left( x\right) & =d\left( x,Tx\right) , \\
A\left( x\right) & =B\left( x,\frac{\lambda \delta \left( x\right) }{2}%
\right) \cap B\left( Tx,\frac{\lambda \delta \left( x\right) }{2}\right) , \\
AA\left( x\right) & =\bigcap_{y\in A\left( x\right) }B\left( y,\frac{\lambda
^{2}\delta \left( x\right) }{2}\right) \cap A\left( x\right) .
\end{align*}%
Fix $x_{1}\in M$ and select $x_{2}\in AA\left( x\right) .$ Then
\begin{equation*}
d\left( Tx_{2},x_{1}\right) \leq \frac{L\lambda \delta \left( x_{1}\right) }{%
2},
\end{equation*}%
and%
\begin{equation*}
d\left( Tx_{2},Tx_{1}\right) \leq \frac{L\lambda \delta \left( x_{1}\right)
}{2}.
\end{equation*}%
It follows from the definition of $\lambda $-hyperconvexity that there
exists $y$ such that
\begin{equation*}
y\in B\left( x_{1},\frac{\lambda \delta \left( x_{1}\right) }{2}\right) \cap
B\left( \varphi (x_{1}),\frac{\lambda \delta \left( x_{1}\right) }{2}\right)
\cap B\left( \varphi \left( x_{2}\right) ,\frac{\left( L-1\right) \lambda
^{2}\delta \left( x_{1}\right) }{2}\right) .
\end{equation*}%
Hence
\begin{equation*}
d\left( x_{2},Tx_{2}\right) \leq d\left( x_{2},y\right) +d\left(
y,Tx_{2}\right) \leq \frac{L\lambda ^{2}\delta \left( x_{1}\right) }{2}.
\end{equation*}%
As in the proof of Theorem \ref{2} we have a sequence $(x_{n})$ such that%
\begin{align*}
\delta \left( x_{n+1}\right) & \leq \frac{L\lambda ^{2}}{2}\delta \left(
x_{n}\right) , \\
d\left( x_{n+1},x_{n}\right) & \leq \frac{\lambda \delta \left( x_{n}\right)
}{2}\leq \frac{\lambda }{2}\left( \frac{L\lambda ^{2}}{2}\right)
^{n-1}\delta \left( x_{1}\right) .
\end{align*}%
It is enough to put $L<\frac{2}{\lambda ^{2}}$ to guarantee that $(x_{n})$
is a Cauchy sequence.
\end{proof}

\begin{corollary}
Every $L$-Lipschitzian involution in a hyperconvex space with $L<2$ has a
fixed point.
\end{corollary}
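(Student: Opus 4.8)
The plan is to read off the corollary as the $\lambda = 1$ instance of Theorem \ref{3}. The fact I would invoke, recorded earlier in the paper, is that a metric space is hyperconvex if and only if it is $1$-hyperconvex. Thus a hyperconvex space is $\lambda$-hyperconvex with $\lambda = 1$, and an $L$-Lipschitzian involution on it is exactly an $L$-Lipschitzian involution on a $\lambda$-hyperconvex space with $\lambda = 1$.

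With $\lambda = 1$ the constant threshold of Theorem \ref{3} becomes
\begin{equation*}
L < \frac{2}{\lambda^{2}} = 2,
\end{equation*}
which is precisely the hypothesis of the corollary. Hence Theorem \ref{3} applies directly and yields $\Fix T \neq \emptyset$.

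If instead one prefers a self-contained argument, I would retrace the proof of Theorem \ref{3} with $\lambda = 1$. There the recursive construction produces a sequence $(x_n)$ with $\delta(x_{n+1}) \le \frac{L\lambda^{2}}{2}\delta(x_n)$ and $d(x_{n+1},x_n) \le \frac{\lambda \delta(x_n)}{2}$; setting $\lambda = 1$ turns the contraction factor into $\frac{L}{2}$, which is strictly less than $1$ exactly when $L < 2$, so $(x_n)$ is Cauchy and its limit is a fixed point. Either way I do not expect any genuine obstacle here: the content is entirely contained in Theorem \ref{3}, and the only thing to verify is that the involution hypothesis (a $\mathbb{Z}_{2}$-action, whose two-point orbits are automatically bounded) together with the constant condition specialize correctly, which they do.
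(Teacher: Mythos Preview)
Your proposal is correct and matches the paper's approach: the corollary is stated immediately after Theorem~\ref{3} with no separate proof, precisely because it is the $\lambda=1$ specialization, using the fact (noted in the paper) that hyperconvexity is equivalent to $1$-hyperconvexity.
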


The above corollary should be compared with the result of K. Goebel and E. Z%
\l otkiewicz \cite{GoZl} who proved that if C is a closed and convex subset
of a Banach space, then every $L$-Lipschitzian involution $T:C\rightarrow C$
with $L<2$ has a fixed point. Our result is the analogue for hyperconvex
metric spaces.

\section{Retractions onto $\Fix G$}

It was proved in \cite{RaSi} (see also \cite{Ba}) that the set of fixed
points of a nonexpansive mapping in hyperconvex spaces is itself hyperconvex
and hence is a nonexpansive retract of the domain. This is no longer true if
a mapping is Lipschitzian with a constant $k>1.$

\begin{example}
Let $a\in (0,1)$ and put
\begin{equation*}
f_{a}(t)=\left\{
\begin{array}{c}
\begin{array}{cc}
\left( 1+a\right) t+a, & t\in \left[ -1,0\right] \\
\left( 1-a\right) t+a, & t\in \left( 0,1\right] .%
\end{array}%
\end{array}%
\right.
\end{equation*}%
Define a mapping $T:B_{\ell _{\infty }}\rightarrow B_{\ell _{\infty }}$ by%
\begin{equation*}
Tx=\left( f_{a}(x_{1}),f_{a}(x_{2}),...\right) .
\end{equation*}%
Then
\begin{equation*}
\Fix T=\left\{ x\in B:x_{n}\in \{0,1\}\text{ for every }n\in \mathbb{N}%
\right\} .
\end{equation*}%
The Lipschitz constant of $T$ equals $1+a$ which is close to $1$ for $a$
close to $0$. Furthermore, $T$ is a bi-Lipschitz bijection and hence we can
define a $\mathbb{Z}$-action on $B_{\ell _{\infty }}$ which is Lipschitzian
(but not uniformly Lipschitzian). Notice that $\Fix T$ is not a (continuous)
retract of $B_{\ell _{\infty }}$ since it consists of points of distance $2$
apart.
\end{example}

We show that if a group action on a hyperconvex space $M$ is $L$-uniformly
Lipschitzian with $L<\sqrt{2}$, then the set of common fixed points is a H%
\"{o}lder continuous retract of $M.$ We begin with a few lemmas concerned
with the radius and the center of a subset $K$ of a metric space $M$. Let%
\begin{align*}
r\left( y,K\right) & =\sup_{x\in A}d\left( y,x\right) , \\
r\left( K\right) & =\inf_{y\in M}\sup_{x\in K}d\left( y,x\right) , \\
C\left( K\right) & =\left\{ y\in M:\sup_{x\in K}d\left( y,x\right) =r\left(
K\right) \right\}
\end{align*}%
denote the radius of $K$ relative to $y,$ the radius of $K$ and the center
of $K$ relative to $M$, respectively. In general, $C\left( K\right) $ may be
empty. The Hausdorff distance of bounded subsets $K$ and $L$ is defined by
\begin{equation*}
D\left( K,L\right) =\max \{\sup_{x\in K}\inf_{y\in L}d\left( y,x\right)
,\sup_{x\in L}\inf_{y\in K}d\left( y,x\right) \}.
\end{equation*}%
If
\begin{equation*}
K_{r}=\bigcup_{x\in A}B\left( x,r\right) ,
\end{equation*}%
then
\begin{equation*}
D\left( K,L\right) =\inf \left\{ r>0:K\subset L_{r}\wedge L\subset
K_{r}\right\} .
\end{equation*}

\begin{lemma}
\label{radius}For bounded subsets $K,L$ of a metric space $M,$%
\begin{equation*}
\left\vert r\left( K\right) -r\left( L\right) \right\vert \leq D\left(
K,L\right) .
\end{equation*}
\end{lemma}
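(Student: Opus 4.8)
The plan is to prove the two one-sided inequalities $r(K)\le r(L)+D(K,L)$ and $r(L)\le r(K)+D(K,L)$ separately; since both the hypothesis and the conclusion are symmetric in $K$ and $L$, it suffices to establish the first and then interchange the roles of $K$ and $L$. Throughout I write $\rho=D(K,L)$.

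First I would exploit the characterization of the Hausdorff distance recorded just before the statement: for every $\eta>0$ we have $K\subset L_{\rho+\eta}$, so each $x\in K$ admits a point $x'\in L$ with $d(x,x')\le\rho+\eta$. Next, because $r(L)=\inf_{y\in M}r(y,L)$ is an infimum that need not be attained (the center $C(L)$ may be empty), I would fix $\varepsilon>0$ and choose an approximate center $y\in M$ with $r(y,L)\le r(L)+\varepsilon$, that is, $d(y,z)\le r(L)+\varepsilon$ for all $z\in L$.

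The heart of the argument is then a single triangle inequality. For an arbitrary $x\in K$, picking $x'\in L$ as above gives
\[
d(y,x)\le d(y,x')+d(x',x)\le \bigl(r(L)+\varepsilon\bigr)+\bigl(\rho+\eta\bigr).
\]
Taking the supremum over $x\in K$ yields $r(y,K)\le r(L)+\rho+\varepsilon+\eta$, and since $r(K)\le r(y,K)$ by definition of $r(K)$ as an infimum over all candidate centers, we obtain $r(K)\le r(L)+\rho+\varepsilon+\eta$. Letting $\varepsilon,\eta\to 0$ gives $r(K)\le r(L)+D(K,L)$. Interchanging $K$ and $L$ gives the reverse bound, and the two together yield $|r(K)-r(L)|\le D(K,L)$.

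I do not expect any genuine obstacle: the argument is a standard $\varepsilon$-perturbed triangle inequality. The only point demanding care is the bookkeeping with the two approximations — neither the center realizing $r(L)$ nor the covering radius realizing $D(K,L)$ need be attained — so I would keep both $\varepsilon$ and $\eta$ explicit until the final passage to the limit, rather than pretending there is an optimal $y$ or an exact nearest point $x'$.
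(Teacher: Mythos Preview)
Your argument is correct and is essentially the paper's own proof: pick an approximate center of $L$, use the Hausdorff distance to approximate each point of $K$ by a point of $L$, apply the triangle inequality, and then pass to the limit and invoke symmetry. The only cosmetic difference is that you keep two separate parameters $\varepsilon$ and $\eta$ where the paper uses a single $\varepsilon$ for both approximations.
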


\begin{proof}
Fix $\varepsilon >0.$ There exist $p,q\in M$ such that%
\begin{equation*}
K\subset B\left( p,r\left( K\right) +\varepsilon \right) ,\ L\subset B\left(
q,r\left( L\right) +\varepsilon \right)
\end{equation*}%
since
\begin{equation*}
\forall x\in K\ \exists y\in L\ d\left( x,y\right) \leq D\left( K,L\right)
+\varepsilon .
\end{equation*}%
Hence%
\begin{equation*}
r\left( q,K\right) \leq r\left( L\right) +\varepsilon +D\left( K,L\right)
+\varepsilon
\end{equation*}%
which gives
\begin{equation*}
r\left( K\right) \leq r\left( L\right) +D\left( K,L\right) +2\varepsilon .
\end{equation*}%
In a similar way we show that
\begin{equation*}
r\left( L\right) \leq r\left( K\right) +D\left( K,L\right) +2\varepsilon .
\end{equation*}%
This completes the proof since $\varepsilon $ is arbitrary.
\end{proof}

\begin{lemma}
\label{center}In hyperconvex spaces,
\begin{equation*}
D\left( C\left( K\right) ,C\left( L\right) \right) \leq D\left( K,L\right)
+\left\vert r\left( K\right) -r\left( L\right) \right\vert .
\end{equation*}
\end{lemma}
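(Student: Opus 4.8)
The plan is to split the Hausdorff estimate into the two inclusions that define it and to obtain each by a \emph{single} application of hyperconvexity. Set $t=D(K,L)+\left\vert r(K)-r(L)\right\vert$. Since both $D(\cdot,\cdot)$ and $\left\vert r(K)-r(L)\right\vert$ are symmetric in $K$ and $L$, it suffices to prove $C(K)\subset\bigcup_{q\in C(L)}B(q,t)$, i.e.\ $C(K)\subset C(L)_{t}$; the reverse inclusion then follows verbatim by interchanging $K$ and $L$. Before that I would record the useful description $C(K)=\bigcap_{x\in K}B(x,r(K))$ and note that $C(K)$ (and $C(L)$) is nonempty and bounded: because $\diam K\le 2r(K)$, the balls $\{B(x,r(K)):x\in K\}$ pairwise satisfy $d(x,x')\le r(K)+r(K)$, so by hyperconvexity their intersection is nonempty, and minimality of $r(K)$ forces $\sup_{x\in K}d(y,x)=r(K)$ for every $y$ in that intersection.

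For the main inclusion I would fix $p\in C(K)$ and consider the enlarged family of closed balls
\begin{equation*}
\{B(p,t)\}\cup\{B(x,r(L)):x\in L\}.
\end{equation*}
The whole proof reduces to checking that this family satisfies the hyperconvexity hypothesis. For pairs inside $L$ we have $d(x,x')\le\diam L\le 2r(L)=r(L)+r(L)$. For a pair $(p,x)$ with $x\in L$ the key estimate is $d(p,x)\le r(K)+D(K,L)$: indeed $p\in C(K)$ gives $d(p,z)\le r(K)$ for every $z\in K$, and each $x\in L$ lies within distance $D(K,L)$ of $K$. Since $t+r(L)=D(K,L)+\left\vert r(K)-r(L)\right\vert+r(L)\ge D(K,L)+r(K)$, the required inequality $d(p,x)\le t+r(L)$ holds.

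Hyperconvexity then yields a point $q$ lying in \emph{every} ball of the family. From $q\in\bigcap_{x\in L}B(x,r(L))$ we get $\sup_{x\in L}d(q,x)\le r(L)$, hence $=r(L)$ by minimality, so $q\in C(L)$; and $q\in B(p,t)$ gives $d(p,q)\le t$. This proves $C(K)\subset C(L)_{t}$, and by symmetry $C(L)\subset C(K)_{t}$, whence $D(C(K),C(L))\le t$, which is exactly the claim (Lemma \ref{radius} already controls the $\left\vert r(K)-r(L)\right\vert$ summand, but here I only need its appearance inside $t$).

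The argument is short, so the only real care needed is technical rather than conceptual. The one point to handle cleanly is that the infimum defining the distance from $x\in L$ to $K$ need not be attained; I would therefore derive $d(p,x)\le r(K)+D(K,L)$ by choosing, for each $x$ and each $\eta>0$, a point $z\in K$ with $d(x,z)\le D(K,L)+\eta$, estimating $d(p,x)\le d(p,z)+d(z,x)\le r(K)+D(K,L)+\eta$, and letting $\eta\to0$. I expect this approximation step, together with making sure the non-strict inequalities and closed balls match the definition of hyperconvexity exactly, to be the main (and essentially only) obstacle; everything else is a direct transcription of the center's description.
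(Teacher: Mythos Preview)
Your proof is correct and follows essentially the same strategy as the paper: for a fixed center point of one set, one applies hyperconvexity to the family of balls defining the center of the other set together with a single ball around the fixed point, and then argues symmetrically. The only cosmetic differences are that the paper starts from $q\in C(L)$ and finds $p\in C(K)$ (you do the reverse), and the paper carries an $\varepsilon$ throughout with the slightly sharper directional radius $D(K,L)+r(L)-r(K)$ before taking absolute values at the end, whereas you work directly with $t=D(K,L)+|r(K)-r(L)|$.
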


\begin{proof}
Notice first that $C\left( K\right) ,C\left( L\right) $ are nonempty in a
hyperconvex space. Fix $\varepsilon >0$ and $q\in C\left( L\right) .$ From
the definition of the Hausdorff metric,
\begin{equation*}
\forall x\in K\ \exists y\in L\ d\left( x,y\right) \leq D\left( K,L\right)
+\varepsilon
\end{equation*}%
which gives
\begin{equation*}
\forall x\in K\ d\left( x,q\right) \leq D\left( K,L\right) +r\left( L\right)
+\varepsilon .
\end{equation*}%
It follows that there exists $p\in C\left( K\right) $ such that
\begin{equation*}
p\in \bigcap_{x\in K}B\left( x,r\left( K\right) \right) \cap B\left(
q,D\left( K,L\right) +r\left( L\right) -r\left( K\right) +\varepsilon
\right) .
\end{equation*}%
Hence
\begin{equation*}
\forall q\in C\left( L\right) \ \exists p\in C\left( K\right) \ d\left(
p,q\right) \leq D\left( K,L\right) +r\left( L\right) -r\left( K\right)
+\varepsilon .
\end{equation*}%
Analogously,
\begin{equation*}
\forall p\in C\left( K\right) \ \exists q\in C\left( L\right) \ d\left(
p,q\right) \leq D\left( K,L\right) +r\left( K\right) -r\left( L\right)
+\varepsilon .
\end{equation*}%
This completes the proof since $\varepsilon $ is arbitrary.
\end{proof}

\begin{corollary}
\label{est}In hyperconvex spaces,%
\begin{equation*}
D\left( C\left( K\right) ,C\left( L\right) \right) \leq 2D\left( K,L\right) ,
\end{equation*}%
\begin{equation*}
r\left( K\right) =r\left( L\right) \Rightarrow D\left( C\left( K\right)
,C\left( L\right) \right) =D\left( K,L\right) .
\end{equation*}
\end{corollary}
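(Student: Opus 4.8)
The plan is to read off both assertions from Lemmas \ref{radius} and \ref{center}, the second one after supplying a reverse estimate. For the first inequality nothing new is needed: Lemma \ref{center} gives $D(C(K),C(L))\le D(K,L)+|r(K)-r(L)|$, and Lemma \ref{radius} bounds $|r(K)-r(L)|$ by $D(K,L)$, so the two combine to $D(C(K),C(L))\le 2D(K,L)$. For the equality the direction ``$\le$'' is equally immediate: when $r(K)=r(L)$ the correction term in Lemma \ref{center} disappears, leaving $D(C(K),C(L))\le D(K,L)$. The entire content is therefore the reverse estimate $D(K,L)\le D(C(K),C(L))$, which is where I expect the real work.

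Write $\rho:=r(K)=r(L)$ and $s:=D(C(K),C(L))$. The tool I would prepare first is the reconstruction identity
\[
K=\bigcap_{p\in C(K)}B(p,\rho),\qquad L=\bigcap_{q\in C(L)}B(q,\rho),
\]
recalling that in a hyperconvex space $C(K)=\bigcap_{x\in K}B(x,\rho)$, so that $C(K)$ is admissible and the identity says $K$ is recovered by intersecting the $\rho$-balls centered at the points of its own center. The inclusion ``$\subseteq$'' is trivial, since $p\in C(K)$ forces $d(x,p)\le\rho$ for every $x\in K$; the inclusion ``$\supseteq$'' is the substantial hyperconvexity input. Granting it, the reverse estimate follows by a selection argument dual to the one in Lemma \ref{center}. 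Fix $\varepsilon>0$ and $x\in K$, and consider the balls $\{B(x,s+\varepsilon)\}\cup\{B(q,\rho):q\in C(L)\}$. Their centers meet the pairwise condition for hyperconvexity: for $q,q'\in C(L)$ one has $d(q,q')\le 2\rho$ (each is within $\rho$ of a fixed point of $L$), while for any $q\in C(L)$ there is, by the definition of $s$, a point $p\in C(K)$ with $d(p,q)\le s+\varepsilon$, whence $d(x,q)\le d(x,p)+d(p,q)\le\rho+(s+\varepsilon)$ because $x\in K$ and $p\in C(K)$ give $d(x,p)\le\rho$. Hyperconvexity then yields a common point $x'$, which lies in $\bigcap_{q\in C(L)}B(q,\rho)=L$ and satisfies $d(x,x')\le s+\varepsilon$. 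Interchanging $K$ and $L$ gives the symmetric half, and $\varepsilon\to0$ produces $D(K,L)\le s$.

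The main obstacle is plainly the reconstruction identity $K=\bigcap_{p\in C(K)}B(p,\rho)$; everything downstream is a routine check of ball-intersection conditions. I would prove ``$\supseteq$'' by contraposition using that $K$ is admissible: if $z\notin K$, some defining ball $B(x_\alpha,r_\alpha)$ of $K$ excludes $z$, and the task is to upgrade this single violated constraint into an actual center $p\in C(K)$ with $d(z,p)>\rho$, exploiting that the farthest-point structure of an admissible set in a hyperconvex space is governed by its defining balls. This upgrade is the delicate step, and it also explains why the equality is to be understood for admissible sets (equivalently, for centers) -- precisely the sets to which the corollary is applied in the next section.
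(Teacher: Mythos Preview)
Your derivation of the first inequality and of the direction $D(C(K),C(L))\le D(K,L)$ when $r(K)=r(L)$ is exactly what the paper intends: the corollary carries no proof because both are immediate from Lemmas~\ref{radius} and~\ref{center}, and indeed the only part invoked later is the bound $D(C(K),C(L))\le 2D(K,L)$.

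The equality sign in the second display, however, should be read as $\le$; as written it is simply false for general bounded $K,L$. On the real line take $K=\{0,2\}$ and $L=\{0,1,2\}$: then $r(K)=r(L)=1$ and $C(K)=C(L)=\{1\}$, so $D(C(K),C(L))=0$, while $D(K,L)=1$. Your attempt to rescue the reverse inequality via the reconstruction identity $K=\bigcap_{p\in C(K)}B(p,\rho)$ already runs into this: the identity fails for this same $K$ (it gives $[0,2]\supsetneq K$), and you rightly note that one would have to restrict to admissible sets. But the paper makes no such restriction, gives no argument for the reverse inequality, and never uses it---so the intended content of the corollary is just the two upper bounds you obtained in the first paragraph.

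In short: your first paragraph is the whole proof the paper has in mind; the rest is an attempt to establish a stronger statement than the authors actually claim (or need), resting on an identity you leave unproved and which fails without extra hypotheses on $K$ and $L$.
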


\begin{example}
Consider the space $\mathbb{R}^{2}$ with the maximum norm and let $K=\left\{
\left( 0,0\right) \right\} ,L=\left\{ \left( t,1\right) ,t\in \left[ -1,1%
\right] \right\} .$ We have $C\left( K\right) =\left\{ \left( 0,0\right)
\right\} ,C\left( L\right) =\left\{ \left( 0,t\right) ,t\in \left[ 0,2\right]
\right\} $. Then
\begin{equation*}
D\left( C\left( K\right) ,C\left( L\right) \right) =2=2D\left( K,L\right)
\end{equation*}%
which shows that the estimation in Corollary \ref{est} is sharp.
\end{example}

\begin{remark}
In general spaces we have no similar estimation. Even in a Hilbert space,
the distance of centers $D\left( C\left( K\right) ,C\left( L\right) \right) $
does not depend on the distance $D\left( K,L\right) $ in a Lipschitz way,
see \cite[Theorem 1]{SzVl}, \cite[Theorem 7]{AlPa}. The following example
shows that there is no hope to extend Lemma \ref{center} to $\lambda $%
-hyperconvex spaces.
\end{remark}

\begin{example}
\label{2-hyp}Let $M=\left\{ z=e^{i\varphi },\varphi \in \left[ 0,2\pi
\right) \right\} $ with the metric
\begin{equation*}
d\left( z_{1},z_{2}\right) =d\left( e^{i\varphi },e^{i\psi }\right) =\left\{
\begin{array}{c}
\left\vert \varphi -\psi \right\vert ,\ \text{if }\left\vert \varphi -\psi
\right\vert \leq \pi , \\
2\pi -\left\vert \varphi -\psi \right\vert ,\text{\ if }\left\vert \varphi
-\psi \right\vert >\pi .%
\end{array}%
\right.
\end{equation*}%
Note that this is a $2$-hyperconvex space. Fix $\varepsilon >0$ and let $%
K=\left\{ 1,-1\right\} =\left\{ e^{0},e^{i\pi }\right\} ,L=\left\{
e^{i\left( \pi -\varepsilon \right) },1\right\} .$ Then $C\left( K\right)
=\left\{ i,-i\right\} =\left\{ e^{i\frac{\pi }{2}},e^{-i\frac{\pi }{2}%
}\right\} ,C\left( L\right) =\left\{ e^{i\frac{\pi -\varepsilon }{2}%
}\right\} $ and hence%
\begin{align*}
D\left( C\left( K\right) ,C\left( L\right) \right) & =\pi -\frac{\varepsilon
}{2}, \\
D\left( K,L\right) & =\varepsilon .
\end{align*}%
Taking $\varepsilon $ close to $0,$ it follows that there is no continuous
dependence of $D\left( C\left( K\right) ,C\left( L\right) \right) $ on $%
D\left( K,L\right) $ in this case.
\end{example}

We conclude with proving a qualitative version of Theorem \ref{1}. The proof
relies on the following two results. The first one is a special case of \cite%
[Theorem 1]{KKM}. Let $\mathcal{A}(M)$ denote the family of all nonempty
admissible subsets of $M$.

\begin{theorem}
\label{selection}Let $M$ be a hyperconvex metric space, let $S$ be any set,
and let $T^{\ast }:S\rightarrow \mathcal{A}(M).$ Then there exists a mapping
$T:S\rightarrow M$ for which $Tx\in T^{\ast }x$ for $x\in S$ and for which $%
d(Tx,Ty)\leq D(T^{\ast }x,T^{\ast }y)$ for each $x,y\in S$.
\end{theorem}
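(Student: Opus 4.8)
The plan is to construct the selection by a one-point extension argument organized through Zorn's lemma, with hyperconvexity supplying each individual extension. Call a pair $(A,f)$ a \emph{partial selection} if $A\subseteq S$ and $f\colon A\to M$ satisfies $f(x)\in T^{\ast}x$ for every $x\in A$ together with $d(f(x),f(y))\le D(T^{\ast}x,T^{\ast}y)$ for all $x,y\in A$. Order partial selections by extension. The empty selection qualifies, and since both defining conditions refer only to pairs of points, the union of a chain of partial selections is again a partial selection; Zorn's lemma then furnishes a maximal partial selection $(A,f)$. The whole theorem reduces to showing $A=S$, for then $T:=f$ is the required map.

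Suppose toward a contradiction that some $s\in S\setminus A$ exists. It suffices to produce a point $p\in T^{\ast}s$ with $d(f(x),p)\le D(T^{\ast}x,T^{\ast}s)$ for all $x\in A$, since adjoining $f(s):=p$ would enlarge $(A,f)$ and contradict maximality. Writing the admissible set as $T^{\ast}s=\bigcap_{\gamma}B(z_{\gamma},\rho_{\gamma})$, the desired $p$ is exactly a point of the intersection of the family of balls
\[
\{B(z_{\gamma},\rho_{\gamma})\}_{\gamma}\ \cup\ \{B(f(x),D(T^{\ast}x,T^{\ast}s))\}_{x\in A}.
\]
By hyperconvexity this intersection is nonempty as soon as the radii are pairwise admissible, that is, $d(u,v)\le r_{u}+r_{v}$ for any two centers; balls of infinite radius (arising when a Hausdorff distance is infinite) equal $M$ and impose no constraint.

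There are three pairwise checks. For two balls cutting out $T^{\ast}s$, admissibility holds because $T^{\ast}s$ is nonempty, so all the $z_{\gamma}$ lie within $\rho_{\gamma}$ of a common point. For two balls $B(f(x),D(T^{\ast}x,T^{\ast}s))$ and $B(f(y),D(T^{\ast}y,T^{\ast}s))$, I would use that $f$ is a partial selection together with the triangle inequality for the Hausdorff distance:
\[
d(f(x),f(y))\le D(T^{\ast}x,T^{\ast}y)\le D(T^{\ast}x,T^{\ast}s)+D(T^{\ast}s,T^{\ast}y).
\]
The remaining, and I expect hardest, check pits a ball $B(z_{\gamma},\rho_{\gamma})$ against a ball $B(f(x),D(T^{\ast}x,T^{\ast}s))$; here one must convert the definition of $D$ into a metric inequality. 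For every $v\in T^{\ast}s$ one has $d(f(x),z_{\gamma})\le d(f(x),v)+\rho_{\gamma}$, and taking the infimum over $v\in T^{\ast}s$ yields $d(f(x),z_{\gamma})\le \rho_{\gamma}+d(f(x),T^{\ast}s)$; since $f(x)\in T^{\ast}x$, the distance $d(f(x),T^{\ast}s)$ is bounded by $\sup_{u\in T^{\ast}x}\inf_{v\in T^{\ast}s}d(u,v)\le D(T^{\ast}x,T^{\ast}s)$, which is precisely what is needed.

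With all three admissibility conditions verified, hyperconvexity produces the point $p$, the extension $(A\cup\{s\},f\cup\{(s,p)\})$ is a strictly larger partial selection, and this contradiction forces $A=S$, completing the construction of $T$. The one genuine subtlety, and the place where the specific geometry enters, is the third compatibility check: everything else is bookkeeping about the Hausdorff metric, but that step is exactly where the nonemptiness of $T^{\ast}s$ and the definition of $D$ must be combined into a single ball-admissibility inequality before hyperconvexity can be invoked.
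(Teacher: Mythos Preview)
Your proof is correct. Note, however, that the paper does not actually prove this statement: it is quoted as a special case of \cite[Theorem~1]{KKM} and used as a black box in the final section. Your Zorn's-lemma argument, extending a maximal partial selection one point at a time and invoking hyperconvexity for the pairwise ball-intersection condition, is precisely the standard approach and is essentially the proof given in the cited reference by Khamsi, Kirk, and Martinez Ya\~{n}ez. The three compatibility checks are handled correctly, including the mixed case where you pass from $d(f(x),T^{\ast}s)$ to $D(T^{\ast}x,T^{\ast}s)$ via $f(x)\in T^{\ast}x$.
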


\begin{theorem}[{see, e.g., {\protect\cite[Prop. 1.10]{BeLi}, \protect\cite[%
Lemma 2.2]{Wi}}}]
\label{holder} Let $(X,d)$ be a complete bounded metric space and let $%
f:X\rightarrow X$ be a $k$-Lipschitzian mapping. Suppose there exists $%
0<\gamma <1$ and $c>0$ such that $d(f^{n+1}(x),f^{n}(x))\leq c\gamma ^{n}$
for every $x\in X$. Then $Rx=\lim_{n\rightarrow \infty }f^{n}(x)$ is a H\"{o}%
lder continuous mapping.
\end{theorem}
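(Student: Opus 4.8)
The plan is to build $R$ as a uniform limit and then trade off two competing estimates on $d(Rx,Ry)$ to read off a H\"{o}lder exponent. First I would check that $Rx=\lim_{n\to\infty}f^{n}(x)$ exists: for $m<n$, telescoping and summing the hypothesis gives
\begin{equation*}
d(f^{m}x,f^{n}x)\le \sum_{j=m}^{n-1}d(f^{j+1}x,f^{j}x)\le c\sum_{j=m}^{\infty }\gamma ^{j}=\frac{c}{1-\gamma }\gamma ^{m},
\end{equation*}
so $(f^{n}x)_{n}$ is Cauchy and converges by completeness; letting $n\to\infty$ in the same estimate gives the uniform tail bound $d(Rx,f^{n}x)\le \frac{c}{1-\gamma }\gamma ^{n}$.

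Next, for arbitrary $x,y\in X$ and every $n\ge 0$ I would insert the points $f^{n}x,f^{n}y$ and use that $f^{n}$ is $k^{n}$-Lipschitzian:
\begin{equation*}
d(Rx,Ry)\le d(Rx,f^{n}x)+d(f^{n}x,f^{n}y)+d(f^{n}y,Ry)\le \frac{2c}{1-\gamma }\gamma ^{n}+k^{n}d(x,y).
\end{equation*}
If $k\le 1$, letting $n\to \infty$ this already makes $R$ nonexpansive (and constant when $k<1$), so H\"{o}lder continuity is immediate; the substantive case is $k>1$.

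The crux is to optimize the free integer $n$. Writing $t=d(x,y)$ and $A=\frac{2c}{1-\gamma }$, I want to balance the decreasing term $A\gamma ^{n}$ against the increasing term $k^{n}t$. Setting $n_{0}=\ln (1/t)/\ln (k/\gamma )$ equalizes them, and a short computation gives $\gamma ^{n_{0}}=k^{n_{0}}t=t^{\alpha }$ with
\begin{equation*}
\alpha =\frac{\ln (1/\gamma )}{\ln (k/\gamma )}\in (0,1),
\end{equation*}
where $\alpha <1$ precisely because $k>1$. For $0<t\le 1$ one has $n_{0}\ge 0$, so taking the integer $n=\lceil n_{0}\rceil$ yields $\gamma ^{n}\le t^{\alpha }$ and $k^{n}t\le k\,t^{\alpha }$, whence $d(Rx,Ry)\le (A+k)t^{\alpha }$.

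The remaining obstacle is to promote this into a single global estimate. For $t>1$ the optimal $n_{0}$ is negative, so the trade-off is unavailable; here I would instead use the trivial bound $d(Rx,Ry)\le \diam X$ together with $t^{\alpha }\ge 1$ to obtain $d(Rx,Ry)\le (\diam X)\,t^{\alpha }$. This is the one place where the boundedness of $X$ is essential, since it guarantees $\diam X<\infty$. Combining the two regimes gives
\begin{equation*}
d(Rx,Ry)\le C\,d(x,y)^{\alpha },\qquad C=\max \{A+k,\diam X\},
\end{equation*}
for all $x,y\in X$, so that $R$ is H\"{o}lder continuous with exponent $\alpha $, completing the proof.
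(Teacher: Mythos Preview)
Your argument is correct and is exactly the standard one: telescope to get the uniform tail bound, split via $f^{n}x,f^{n}y$, and then balance $A\gamma^{n}$ against $k^{n}d(x,y)$ by choosing $n$ near $\ln(1/t)/\ln(k/\gamma)$; the boundedness of $X$ handles the large-distance regime. Note, however, that the paper does not supply its own proof of this theorem---it is quoted as a known result from \cite[Prop.~1.10]{BeLi} and \cite[Lemma~2.2]{Wi}---and your write-up is essentially the argument one finds in those references.
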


If we analyze the proof of Theorem \ref{1}, we see that the sets $C\left(
x\right) ,CC\left( x\right) $ are admissible. Furthermore, if a group acts
uniformly $L$-Lipschitzly on a hyperconvex space, then by Corollary \ref{est}%
,%
\begin{align*}
D\left( O\left( x\right) ,O\left( y\right) \right) & \leq Ld\left(
x,y\right) , \\
D\left( C\left( x\right) ,C\left( y\right) \right) & \leq 2Ld\left(
x,y\right) , \\
D\left( CC\left( x\right) ,CC\left( y\right) \right) & \leq 4Ld\left(
x,y\right)
\end{align*}%
and we can use Theorem \ref{selection} to obtain a $4L$-Lipschitzian
selection $f:M\rightarrow M$ such that $f(x)\in CC\left( x\right) ,x\in M.$
This leads to the following result.

\begin{theorem}
If $\{T_{a}:a\in G\}$ is a group of uniformly $L$-Lipschitzian mappings on a
hyperconvex space $M$ with $L<\sqrt{2}$ and the orbits are bounded, then $%
\Fix G$ is a H\"{o}lder continuous retract of $M.$
\end{theorem}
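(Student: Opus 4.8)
The plan is to exhibit an explicit retraction $R:M\to \Fix G$ and to read off its H\"older continuity from Theorem \ref{holder}. The starting point is the $4L$-Lipschitzian selection $f:M\to M$ with $f(x)\in CC(x)$ already constructed via Theorem \ref{selection}. The key observation is that iterating $f$ from a point reproduces exactly the sequence built in the proof of Theorem \ref{1}: setting $x_{1}=x$ and $x_{n+1}=f(x_{n})=f^{n}(x)$ we have $f^{n}(x)\in CC(f^{n-1}(x))$, so the estimates of that proof give $\delta(f^{n}(x))\le (\frac{L^{2}}{2})^{n}\delta(x)$ and $d(f^{n+1}(x),f^{n}(x))\le \tfrac{1}{2}\delta(f^{n}(x))$. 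Since hyperconvex spaces are complete, the limit $R(x)=\lim_{n\to\infty}f^{n}(x)$ exists, and it lies in $\Fix G$ because it is precisely the common fixed point produced in Theorem \ref{1}.

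Next I would verify that $R$ is a retraction onto $\Fix G$. The inclusion $R(M)\subseteq \Fix G$ is the previous step. For the reverse, if $p\in \Fix G$ then its orbit is the singleton $\{p\}$, so $\delta(p)=0$ and $CC(p)=\{p\}$; hence $f(p)=p$, $f^{n}(p)=p$ for all $n$, and $R(p)=p$. Thus $R$ fixes $\Fix G$ pointwise and maps $M$ onto it, so it is a retraction once we know it is continuous, which follows from the H\"older estimate below.

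The heart of the matter is H\"older continuity of $R$. Combining the two displayed estimates yields the decay
\[
d(f^{n+1}(x),f^{n}(x))\le \frac{\delta(x)}{2}\left(\frac{L^{2}}{2}\right)^{n},
\]
and since $L<\sqrt{2}$ we have $\gamma:=\frac{L^{2}}{2}<1$. This is exactly the hypothesis of Theorem \ref{holder}, with Lipschitz constant $k=4L$ and $c=\frac{\delta(x)}{2}$, so on any domain on which $f$ acts and on which $\delta$ is uniformly bounded the limit map $R$ is H\"older continuous, with exponent depending only on $L$ (through $\gamma$ and $k$).

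The main obstacle is that Theorem \ref{holder} demands a bounded domain and a \emph{uniform} constant $c$, whereas here $c=\frac{\delta(x)}{2}$ varies with $x$ and $M$ need not be bounded. To circumvent this I would use two monotonicity facts. First, $\delta$ is $2L$-Lipschitz on $M$: the triangle estimate in the proof of Lemma \ref{bounded} gives $\delta(y)\le \delta(x)+2Ld(x,y)$, so $\delta$ is bounded on every bounded set. Second, $\delta(f(x))\le \frac{L^{2}}{2}\delta(x)\le \delta(x)$, so each sublevel set $\{x:\delta(x)\le \rho\}$ is $f$-invariant, while the summable displacements $\sum_{n}d(f^{n+1}(x),f^{n}(x))\le \frac{\delta(x)}{2(1-\gamma)}$ show that the forward $f$-orbit of any bounded set stays bounded. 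Fixing $p\in \Fix G$ and $\rho>0$, the closure of the forward $f$-orbit of $\{x:\delta(x)\le\rho\}\cap \bar B(p,\rho)$ is therefore a closed, bounded, $f$-invariant set carrying the uniform constant $c=\frac{\rho}{2}$, on which Theorem \ref{holder} applies verbatim. Since the exponent is the same for every $\rho$ and these sets exhaust $M$ as $\rho\to\infty$, this gives H\"older continuity of $R$ on every bounded subset of $M$ (and globally when $M$ is bounded), completing the proof that $\Fix G$ is a H\"older continuous retract of $M$.
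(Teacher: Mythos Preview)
Your proof follows essentially the same route as the paper: build the $4L$-Lipschitz selection $f$ with $f(x)\in CC(x)$, iterate it, identify the limit $R(x)=\lim_n f^n(x)$ as the common fixed point from Theorem~\ref{1}, check $R$ is a retraction, and invoke Theorem~\ref{holder}. The estimates $\delta(f^n(x))\le(\tfrac{L^2}{2})^n\delta(x)$ and $d(f^{n+1}(x),f^n(x))\le\tfrac12\delta(f^n(x))$ are exactly those the paper records.

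Where you go beyond the paper is in your last two paragraphs. The paper simply writes ``Using Theorem~\ref{holder} it follows that $R$ is a H\"older continuous retraction,'' without commenting on the fact that Theorem~\ref{holder} is stated for a \emph{bounded} complete space with a \emph{uniform} constant $c$, while here $c=\tfrac12\delta(x)$ depends on the point and $M$ may be unbounded. You noticed this and supplied a correct localisation: $\delta$ is $2L$-Lipschitz (so bounded on bounded sets), the sublevel sets $\{\delta\le\rho\}$ are $f$-invariant since $\delta(f(x))\le\tfrac{L^2}{2}\delta(x)$, and the summable displacements keep forward $f$-orbits of bounded sets bounded; hence Theorem~\ref{holder} applies on suitable closed bounded $f$-invariant subsets with a fixed H\"older exponent depending only on $L$. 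This yields H\"older continuity of $R$ on every bounded subset (and globally when $M$ is bounded), which is the honest conclusion obtainable from Theorem~\ref{holder} as stated. So your argument is the paper's argument, made rigorous at the one point the paper is terse.
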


\begin{proof}
Let $L\in \left( 1,\sqrt{2}\right) .$ The observation given above gives a $%
4L $-Lipschitzian mapping $f:M\rightarrow M$ such that $f\left( x\right) \in
CC\left( x\right) $ for each $x\in M.$ Fix $\bar{x}\in M$ and let $x_{1}=f(%
\bar{x})\in CC\left( \bar{x}\right) .$ Now we can follow the proof of
Theorem \ref{1} to get
\begin{equation*}
\delta \left( f(\bar{x})\right) \leq \frac{L^{2}}{2}\delta \left( \bar{x}%
\right) .
\end{equation*}%
Applying this argument again, we obtain recursively a sequence $(f^{n}(\bar{x%
}))$ such that $\delta \left( f^{n+1}(\bar{x})\right) \leq \frac{L^{2}}{2}%
\delta \left( f^{n}(\bar{x})\right) $ and $d\left( f^{n+1}(\bar{x}),f^{n}(%
\bar{x})\right) =\frac{\delta \left( f^{n}(\bar{x})\right) }{2}$. It follows
that $(f^{n}(\bar{x}))$ is a Cauchy sequence and we can define a mapping%
\begin{equation*}
R\bar{x}=\lim_{n\rightarrow \infty }f^{n}(\bar{x})
\end{equation*}%
for every $\bar{x}\in M.$ It is not difficult to see that $T_{a}R\bar{x}=R%
\bar{x}$ for every $a\in G$ and $\bar{x}\in M.$ Furthermore, $R\bar{x}=\bar{x%
}$ if $\bar{x}\in \Fix G$ since then $f(\bar{x})=\bar{x}.$ Using Theorem \ref%
{holder} it follows that $R:M\rightarrow \Fix G$ is a H\"{o}lder continuous
retraction onto $\Fix G.$
\end{proof}

\begin{acknowledgement}
The authors are grateful to Rafael Esp\'{\i}nola and the referee for helpful
comments on the manuscript and drawing their attention to the paper of U.
Lang \cite{La}.
\end{acknowledgement}

\end{document}